\documentclass{amsart}
\pagestyle{plain}
\usepackage{amsmath,amssymb}
\usepackage{bbm}
\usepackage{graphicx}
\usepackage{comment}
\usepackage{enumerate}
\usepackage{enumitem}
\usepackage{mathrsfs}
\usepackage{afterpage}
\usepackage[T1,T5]{fontenc}
\usepackage[colorlinks,citecolor=blue,urlcolor=blue,bookmarks=false,hypertexnames=true]{hyperref} 
\usepackage{cleveref}
\usepackage{xcolor}
\usepackage{multicol}
\usepackage{tkz-euclide,amsmath}

\usepackage{etoolbox}

\usepackage{tikz-cd}
\tikzcdset{scale cd/.style={every label/.append style={scale=#1},
		cells={nodes={scale=#1}}}}

\usepackage{tikz}

\def\K{{\mathbbm k}}

\newcommand{\reg}{\operatorname{reg}}


\newcommand{\lcm}{\operatorname{lcm}}
\newcommand{\soc}{\operatorname{socle}}
\newcommand{\depth}{\operatorname{depth}}
\newcommand{\fm}{\mathfrak m}
 \newcommand{\pd}{\operatorname{pd}}
\theoremstyle{plain}

\newtheorem{theorem}{Theorem}[section]
\newtheorem{lemma}[theorem]{Lemma}

\newtheorem{proposition}[theorem]{Proposition}

\theoremstyle{definition}
\newtheorem{definition}[theorem]{Definition}

\newtheorem{example}[theorem]{Example}
\newtheorem{remark}[theorem]{Remark}

\newtheorem{construction}[theorem]{Construction}

\title[]{Linear Resolutions and Linear Quotients in Three Variables}

\author{{Ho\`ai \fontencoding{T5}\selectfont \DJ\`ao}}  
\address[Department of Mathematics]{Virginia Tech}
\email{hoaidao@vt.edu}

\author{Sreehari Suresh-Babu}
\address[Department of Mathematics]{University of Kansas}
\email{sreehari@ku.edu}


\begin{document}
\begin{abstract}
    In this paper, we provide a combinatorial criteria for equigenerated monomial ideals in three variables to have linear resolutions. As a consequence, we prove that in three variables, equigenerated monomial ideals with linear resolutions have linear quotients. 
\end{abstract}
\maketitle
\section{Introduction}
In this paper, we prove the following theorem.
\begin{theorem}
    Let $I\subseteq \K[x,y,z]$ be a monomial ideal generated in a single degree. Then the following statements are equivalent:
    \begin{enumerate}
        \item $I$ has a linear resolution;
        \item $I$ has linear quotients;
        \item Every power of $I$ has a linear resolution.
    \end{enumerate}
\end{theorem}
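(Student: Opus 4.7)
The implications $(2)\Rightarrow(1)$ and $(3)\Rightarrow(1)$ are standard: linear quotients on an equigenerated monomial ideal force a linear resolution (Herzog--Takayama), and $I=I^1$. My plan is therefore to establish $(1)\Rightarrow(2)$ and $(1)\Rightarrow(3)$, in both cases leveraging the three-variable combinatorial criterion for linear resolution promised in the abstract.

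For $(1)\Rightarrow(2)$, view the minimal generators of $I$ (equigenerated in degree $d$) as lattice points on the triangle $\{(a,b,c)\in\N^3:a+b+c=d\}$. The combinatorial criterion should pick out exactly those subsets of this triangle which produce a linear resolution, presumably via some convexity/connectivity/staircase condition that rules out the ``holes'' which generate non-linear syzygies (morally an analog of the Eagon--Reiner picture, but adapted to the full lattice triangle rather than to independence complexes). Granted such a criterion, I would construct a linear quotient order by induction on the number of generators: locate an extremal generator $m$ (a corner in the support) whose removal preserves the combinatorial property, apply induction to the ideal $I'$ generated by the remaining elements, and verify that the colon $(I':m)$ is generated by variables.

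For $(1)\Rightarrow(3)$, the aim is to show that the combinatorial criterion is preserved under forming powers. Because $I^s$ is equigenerated in degree $sd$ and its generator support sits inside the corresponding triangle as an $s$-fold Minkowski sum of the support of $I$, passing to a power ought only to ``fill in'' the generator set and cannot create the kinds of holes that obstruct linearity. Consequently $I^s$ satisfies the criterion and therefore has a linear resolution.

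The main obstacle is isolating the correct combinatorial characterization and proving its equivalence to linear resolution, particularly for non-squarefree ideals where Eagon--Reiner duality is not directly available. A natural route is polarization, reducing to a squarefree analysis where Alexander duality and the topology of independence complexes apply; but one must then control how linearity transfers back under depolarization in the three-variable regime. A second delicate step will be verifying that the criterion propagates under Minkowski sum to all powers: while the intuition that ``filling in only helps'' is clear, the precise invariant identified in the criterion must be robust enough to make this rigorous without introducing spurious obstructions at higher degrees.
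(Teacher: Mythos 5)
Your identification of the trivial implications $(2)\Rightarrow(1)$ and $(3)\Rightarrow(1)$ is correct and matches the paper. Beyond that, the proposal is a high-level plan rather than a proof, and it diverges from the paper on both substantive implications, in ways worth flagging.

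For $(1)\Rightarrow(3)$, the paper does not give a combinatorial argument at all: it simply invokes a known theorem (Eisenbud--Huneke--Ulrich, \emph{The regularity of Tor and graded Betti numbers}, Cor.~7.9), which already implies that linear resolution is preserved under powers in this setting. Your proposed alternative --- show that the three-variable criterion is stable under Minkowski sums of generator supports --- is genuinely different and not obviously easier. In particular, it is not enough to observe that the support of $I^s$ is an $s$-fold Minkowski sum: you would need to show that this operation cannot create either disconnected $G_{I^s}(u,v)$ or a ``bad configuration'' (the paper's name for a socle element of degree $\geq sd$), and neither is self-evident. If you want to go this route, you must prove it; the ``filling in only helps'' intuition is precisely the kind of claim that needs a careful argument, and the paper deliberately sidesteps it.

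For $(1)\Rightarrow(2)$, which is where all the real work in the paper lives, the proposal has a genuine gap. You posit a ``combinatorial criterion'' without stating it, and then the entire inductive step --- ``locate an extremal generator $m$ whose removal preserves the combinatorial property'' --- is asserted without justification. That is exactly the hard part. The paper's criterion (Theorem~\ref{th:linreschar}) is: $I$ has a linear resolution iff $G_I(u,v)$ is connected for all pairs of generators \emph{and} $I$ has no bad configuration (equivalently, $\soc(S/I)$ sits in degree $d-1$). The paper then fixes a concrete ordering of generators (the ``tree ordering'' of Construction~\ref{con:tree}, built level-by-level in $z$-degree with a specific left/right rule), and proves by contradiction that each truncation $I_j=(m_1,\dots,m_j)$ is linearly presented, using Proposition~\ref{prop:linearquotient}; the contradiction is reached by exhibiting an explicit bad configuration in $I$ itself. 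This requires a nontrivial two-case geometric analysis on the lattice triangle. Your ``remove-a-corner'' plan is the reverse direction and is not symmetric: removing an arbitrary extremal generator can easily destroy connectivity of the dual graph, so identifying the \emph{right} one to remove and proving it always exists is essentially the whole theorem. Finally, the suggestion to reduce via polarization to the squarefree/Alexander-duality setting would blow up the number of variables and lose the three-variable structure the paper's argument depends on; the paper works directly with the non-squarefree ideal and its socle, with no polarization step.
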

Our main contribution is to show that if a monomial ideal in $\K[x,y,z]$ has a linear resolution, then it has linear quotients (see \Cref{th:main})\footnote{Via personal communication, we were informed that \Cref{th:main} was independently obtained in the recent preprint \cite{cui} using different methods.}. We give a \emph{geometric} proof. It is based on a combinatorial characterization of linearity of free resolutions of monomial ideals in $\K[x,y,z]$ (see \Cref{th:linreschar}) inspired by a similar characterization of zero-dimensional monomial ideals with almost linear resolutions in \cite[Theorem 4.2]{DaoEisenbud}. 

To put our theorem into context, we briefly recall necessary definitions and similar results. Let $S=\K[x_1,\ldots,x_n]$ be a standard graded polynomial ring over $\K$ and $\fm=(x_1,\ldots,x_n)$ be its homogeneous maximal ideal. If $I\subseteq S$ is a homogeneous ideal generated in a single degree and if all the entries in the differential maps of its minimal free resolution are either zero or linear forms, then we say $I$ has a \emph{linear resolution}. More generally, $I$ is said to \emph{satisfy the condition $N_{d,p}$} if $I$ is generated in degree $d$ and its minimal free resolution is linear up to homological degree $p-1$.

The combinatorial study of the $N_{d,p}$ property of monomial ideals is an active research topic. Fr\"oberg gave a combinatorial description of edge ideals having linear resolutions \cite{Froberg}, and this was later extended to arbitrary $N_{2,p}$ by Eisenbud, Green, Hulek and Popescu \cite{EisenbudEtAl}. Bigdeli, Herzog, and Zaare-Nahandi described monomial ideals satisfying $N_{d,2}$ in terms of their dual graph \cite{IndexOfPowers}. Recently, Dao and Eisenbud provided a characterization of $\fm$-primary monomial ideals satisfying $N_{d,n-1}$ \cite{DaoEisenbud}. But on the other hand, the well-known example of the Stanley-Reisner ideal of a triangulation of the real projective plane $\mathbb{RP}^2$ by Reisner \cite{Reisner} shows that the linearity of the minimal free resolution of an ideal may depend on the characteristic of the base field. Consequently, one cannot always expect a combinatorial characterization of monomial ideals satisfying $N_{d,p}$. 

A \emph{combinatorial} property closely related to linear resolution is the linear quotient property. Recall that a monomial ideal $I$ has \emph{linear quotients} if there exists an order $m_1,\ldots,m_r$ of minimal generators of $I$ such that $(m_1,\ldots,m_{i-1}):m_i$ is generated by variables for $i=2,\ldots,r$. Many known families of ideals have linear quotients; see, for example, \cite{HerzogTakayama, MM, Ficarra}. If $I$ has linear quotients, then $I$ is componentwise linear \cite{SharifanVarbaro, JahanZheng}; see \cite{HerzogHibi} for the definition. In particular, if $I$ is an equigenerated monomial ideal with linear quotients, then $I$ has a linear resolution. The converse is not true, and as far as the authors are aware, Reisner's example mentioned above seems to be the ``smallest'' known monomial counterexample. But on the other hand, the converse is known to be true for quadratic monomial ideals \cite{HerzogHibiZheng}, and monomial ideals in  $\K[x,y]$ \cite{dao2025componentwiselinearidealssums}. For equigenerated monomial ideals in four or five variables, the authors do not know whether the converse holds.

This paper is organized as follows. In \Cref{sec: prelim}, we recall basic definitions and standard results we need. In \Cref{sec:linres}, we present the proofs of our main results.
\subsection*{Acknowledgments} We thank Prof. Hailong Dao for helpful conversations, constant encouragement and his comments on an earlier draft. We are also indebted to him for \Cref{th:linreschar}. The second author acknowledges support from the U.G. Mitchell Scholarship by the Department of Mathematics, University of Kansas. Computations using \texttt{Macaulay2} \cite{M2} have been crucial in our investigation. 


\section{Preliminaries}\label{sec: prelim}

Let $\K$ be a field, and $S=\K[x_1,\ldots, x_n]$ be the standard graded polynomial ring in $n$ variables over $\K$. Let $\fm=(x_1,\ldots,x_n)\subseteq S$ be the homogeneous maximal ideal of $S$. 

Let $I\subseteq S$ be a homogeneous ideal and
 \[
 0\to F_s\to F_{s-1}\to \cdots\to F_1\to F_0
 \]
 be the minimal free resolution of $I$, where $F_i=\oplus_jS(-j)^{\beta_{i,j}(I)}$. The \emph{Castelnuovo-Mumford regularity} of $I$, denoted by $\reg I$, is defined as $\max\{j-i: \beta_{i,j}(I)\neq 0\}$. We say $I$ has a \emph{linear resolution} if $I$ is generated by homogeneous polynomials of some degree $d\geq 0$, and for all $i\geq 1$, $\beta_{i,i+j}(I)=0$ for all $j\neq d$, or equivalently, if $I$ is generated in degree $d$ and $\reg I=d$.

\begin{definition}[{\cite{ConcaHerzog}}]
    We say a homogeneous ideal $I\subseteq S$ has \emph{linear quotients} if there exists a minimal system of homogeneous generators $f_1,\ldots,f_s$ of $I$ such that $(f_1,\ldots,f_{i-1}):f_i$ is generated by linear forms for all $i$.
\end{definition}
An ideal $I\subseteq S$ is a \emph{monomial ideal} if it is generated by monomials. A monomial ideal $I$ has a unique minimal system of monomial generators, which we denote by $\mathcal G(I)$. 

\begin{definition}[{\cite{IndexOfPowers, DaoEisenbud}}]
    Let $I\subseteq S$ be a monomial ideal. The \emph{dual graph} $G_I$ of $I$ is defined as follows: the vertices of $G_I$ are the minimal monomial generators of $I$, and $\{f,g\}$ forms an edge if and only if $\deg \lcm(f,g)=\deg f+1=\deg g+1$.

    If $f,g\in \mathcal G(I)$, then $G_I(f,g)$ is the induced subgraph of $G_I$ with vertex set
    \[
    V(G_I(f,g))=\{h\in G(I): h\text{ divides } \lcm(f,g)\}.
    \]
    We shall denote the dual graph of $\fm^d$ by $\Delta_n(d)$, or simply by $\Delta(d)$ when $n$ is clear from the context. 
\end{definition}

The following result is well-known (see, for example, \cite[Proposition 1.1]{IndexOfPowers} or \cite[Proposition 2.2]{DaoEisenbud}):

\begin{theorem}\label{thm:linearpres}
    Let $I$ be a monomial ideal generated in degree $d$. Then $I$ is linearly presented if and only if $G_I(f,g)$ is connected for all $f,g\in \mathcal{G}(I)$. 
\end{theorem}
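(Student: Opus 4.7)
The plan is to analyze linear presentation via the $\mathbb{Z}^n$-graded structure of $\operatorname{Tor}_1^S(I,\K)$. For a monomial ideal $I$ and any monomial $\mu$, the multigraded Betti number $\beta_{1,\mu}(I)$ equals $\dim_{\K}\widetilde{H}_0(\Gamma_\mu;\K)$, where $\Gamma_\mu$ is the simplicial complex on $\mathcal{G}(I)$ whose faces are the non-empty subsets $T$ with $\operatorname{lcm}(T)$ a proper divisor of $\mu$. Consequently, $I$ is linearly presented if and only if $\Gamma_\mu$ is connected for every monomial $\mu$ of total degree $>d+1$, so the task reduces to matching this condition with the connectedness of every $G_I(f,g)$.

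For the ``if'' direction, assume every $G_I(f,g)$ is connected, fix $\mu$ with $\deg\mu>d+1$, and take generators $m_i,m_j$ dividing $\mu$. Any edge $\{h_1,h_2\}$ of $G_I$ satisfies $\deg\operatorname{lcm}(h_1,h_2)=d+1<\deg\mu$, hence is an edge of $\Gamma_\mu$ whenever $h_1,h_2$ both divide $\mu$; in particular, $G_I(m_i,m_j)$ sits as a subgraph of the $1$-skeleton of $\Gamma_\mu$, and a path from $m_i$ to $m_j$ in the former transports to a path in the latter. This shows $\Gamma_\mu$ is connected and $\beta_{1,\mu}(I)=0$.

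For the ``only if'' direction, I would induct on $\deg\operatorname{lcm}(f,g)$. The base case $\deg\operatorname{lcm}(f,g)\leq d+1$ is automatic: if $f=g$ the graph has a single vertex, and if $f\neq g$ with $\deg\operatorname{lcm}(f,g)=d+1$, then any two distinct degree-$d$ divisors of $\operatorname{lcm}(f,g)$ have lcm equal to $\operatorname{lcm}(f,g)$, making $G_I(f,g)$ a complete graph. For larger $\mu=\operatorname{lcm}(f,g)$, the complex $\Gamma_\mu$ is connected by linear presentation, so there is a path from $f$ to $g$ in its $1$-skeleton. Each edge $\{h_1,h_2\}$ of this path has $\operatorname{lcm}(h_1,h_2)$ a proper divisor of $\mu$; if $\deg\operatorname{lcm}(h_1,h_2)=d+1$, the edge already lies in $G_I(f,g)$, and otherwise the inductive hypothesis furnishes a path in $G_I(h_1,h_2)\subseteq G_I(f,g)$ joining $h_1$ to $h_2$. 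Concatenating yields a path in $G_I(f,g)$ from $f$ to $g$.

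The main technical input is the multigraded Betti number formula identifying $\beta_{1,\mu}(I)$ with $\dim\widetilde{H}_0(\Gamma_\mu;\K)$; once that is invoked, the equivalence amounts to transferring paths between two natural graph structures on the set of divisors of $\mu$, with the inductive replacement of ``non-linear'' edges being the only place where a slightly non-trivial argument is needed.
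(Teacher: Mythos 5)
The paper itself does not prove Theorem~\ref{thm:linearpres}: it is stated as a well-known result with citations to \cite[Prop.~1.1]{IndexOfPowers} and \cite[Prop.~2.2]{DaoEisenbud}, so there is no in-paper argument to compare against. Taken on its own, your proof is correct, and the key input you invoke --- the Taylor-complex/lcm-lattice formula $\beta_{1,\mu}(I)=\dim_\K\widetilde{H}_0(\Gamma_\mu;\K)$, where $\Gamma_\mu$ is the subcomplex of the full simplex on $\mathcal{G}(I)$ consisting of faces whose lcm properly divides $\mu$ --- is a standard fact (Bayer--Peeva--Sturmfels; equivalently the Gasharov--Peeva--Welker lcm-lattice description), and is essentially what underlies the references the paper cites. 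Your ``if'' direction (every edge of $G_I$ between divisors of $\mu$ is already an edge of $\Gamma_\mu$, so each connected $G_I(m_i,m_j)$ embeds into the $1$-skeleton of $\Gamma_\mu$) and your ``only if'' direction (induction on $\deg\lcm(f,g)$, replacing each non-linear edge of a path in $\Gamma_\mu$ by a path supplied by the inductive hypothesis inside $G_I(h_1,h_2)\subseteq G_I(f,g)$) are both sound.

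One point is slightly underargued and worth making explicit. At the inductive step of the ``only if'' direction you produce a path from $f$ to $g$ in $G_I(f,g)$, i.e.\ you show $f$ and $g$ lie in a common component, whereas the theorem asserts that the \emph{entire} graph $G_I(f,g)$ is connected. The fix uses nothing new: for any vertex $h$ of $G_I(f,g)$ one has $\lcm(f,h)\mid\lcm(f,g)$, so either $\deg\lcm(f,h)<\deg\lcm(f,g)$ and the inductive hypothesis for the pair $(f,h)$ gives a path from $f$ to $h$ inside $G_I(f,h)\subseteq G_I(f,g)$, or else $\lcm(f,h)=\lcm(f,g)$ and your path construction run for the pair $(f,h)$ does the job directly. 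This is an easy omission to repair, not a conceptual gap.
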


\begin{definition}[{\cite[Definition 4.1]{DaoEisenbud}}]
    The \emph{$d$-shadow} of a monomial $f\in S$ is the set of all degree $d$ monomials that divide $f$. We denote it by $S_d(f)$. 
\end{definition}

\begin{example}\label{eg:conv}
    Let $S=\K[x,y,z]$. Our convention of drawing the dual graph of an ideal $I\subseteq S$ generated in degree $d$ is as follows:
    \begin{enumerate}
        \item The dual graph $\Delta(d)$ is drawn as a lattice simplex with the left corner labeled $x^d$, the right corner $y^d$ and the top corner $z^d$.
        \item We draw the dual graph of $G_I$ as an induced subgraph of $\Delta(d)$. 
        \item The generators of $I$ are labeled in blue.
        \item The degree $d$ monomials not in $I$ are labeled in red.
    \end{enumerate}
    The dual graphs of $\fm^3$ and $(x^3, x^2y, xy^2, y^3, x^2z, y^2z)$ are depicted in \Cref{f:d=3}. Note that both ideals are linearly presented by \Cref{thm:linearpres}. Furthermore, the red dots in the second graph form the $3$-shadow of $xyz^3$.
    
    \begin{figure}[hbt]
    \begin{multicols}{2}

    \begin{center}
			\begin{tikzpicture}[scale=0.5][line join = round, line cap = round]

				\draw[-, thick]  (-3,1.5) -- (3,1.5) -- (0,6) -- cycle;
				\draw[-, thick]  (-2,3) -- (-1,1.5); 
				\draw[-, thick]  (-1,4.5) -- (1,4.5);
				\draw[-, thick]   (2,3) -- (1,1.5);
                    \draw[-, thick]   (2,3) -- (-2,3);
                    \draw[-, thick]   (-1,4.5) -- (1,1.5);
                    \draw[-, thick]   (1,4.5) -- (-1,1.5);

				\fill[blue] (-3,1.5) circle (3pt) node[below] {$x^3$};
				\fill[blue] (-1,1.5) circle (3pt) node[below] {$x^2y$};
				
				\fill[blue] (1,1.5) circle (3pt) node[below] {$xy^2$};
				\fill[blue] (3,1.5) circle (3pt) node[below] {$y^3$};
				
				\fill[blue] (-2,3) circle (3pt) node[left] {$x^2z$}; 
                \fill[blue] (0,3) circle (3pt) node[below right] {$xyz$}; 
				\fill[blue] (2,3) circle (3pt) node[right] {$y^2z$};
				
				\fill[blue] (-1,4.5) circle (3pt) node[left] {$xz^2$};
				\fill[blue] (1,4.5) circle (3pt) node[right] {$yz^2$};
				
				\fill[blue] (0,6) circle (3pt) node[above] {$z^3$};

			\end{tikzpicture}
		\end{center}

        \columnbreak 

        \begin{center}
			\begin{tikzpicture}[scale=0.5][line join = round, line cap = round]

				\draw[-, thick]  (-3,1.5) -- (3,1.5) -- (0,6) -- cycle;
				\draw[-, thick]  (-2,3) -- (-1,1.5); 
				\draw[-, thick]  (-1,4.5) -- (1,4.5);
				\draw[-, thick]   (2,3) -- (1,1.5);
                    \draw[-, thick]   (2,3) -- (-2,3);
                    \draw[-, thick]   (-1,4.5) -- (1,1.5);
                    \draw[-, thick]   (1,4.5) -- (-1,1.5);

				\fill[blue] (-3,1.5) circle (3pt) node[below] {$x^3$};
				\fill[blue] (-1,1.5) circle (3pt) node[below] {$x^2y$};
				
				\fill[blue] (1,1.5) circle (3pt) node[below] {$xy^2$};
				\fill[blue] (3,1.5) circle (3pt) node[below] {$y^3$};
				
				\fill[blue] (-2,3) circle (3pt) node[left] {$x^2z$}; 
                \fill[red] (0,3) circle (3pt) node[below right] {$xyz$}; 
				\fill[blue] (2,3) circle (3pt) node[right] {$y^2z$};
				
				\fill[red] (-1,4.5) circle (3pt) node[left] {$xz^2$};
				\fill[red] (1,4.5) circle (3pt) node[right] {$yz^2$};
				
				\fill[red] (0,6) circle (3pt) node[above] {$z^3$};

			\end{tikzpicture}
		\end{center}
        
    \end{multicols}
        \caption{The dual graphs of $(x,y,z)^3$  and $(x^3, x^2y, xy^2, y^3, x^2z, y^2z)$ are on the left and right, respectively. The generators of the ideal are labeled in blue.} 
        \label{f:d=3}
        \end{figure}
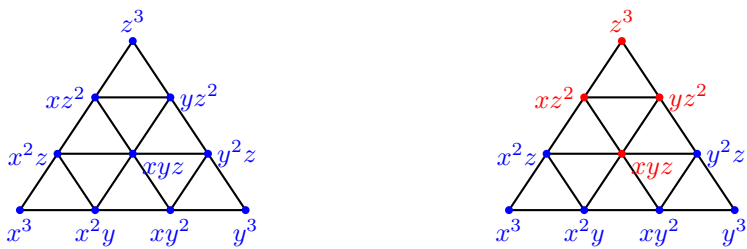
         
\end{example}

 Recall that the \emph{socle} of a homogeneous $S$-module $M$ is the submodule $0:_{M} \fm$, and is denoted by $\soc(M)$.  The depth of $M$ is zero if and only if $\soc(M)\neq 0$. The socle degrees of $M$ can be deduced by looking at the back twists in the minimal free resolution of $M$.
 
\begin{lemma}[{\cite[Lemma 1.3]{soclefits}}]\label{lem: socledeg}
    Let $S=\K[x_1,\ldots, x_n]$, $I\subseteq S$ a homogeneous ideal such that $\depth(S/I)=0$, and let $\mathbb F$ be the minimal graded free resolution of $S/I$. If $F_n=\oplus_{i=1}^r S(-a_i)$, then $\soc(S/I)\cong \oplus_{i=1}^r k(-(a_i-n))$ as graded vector spaces.
\end{lemma}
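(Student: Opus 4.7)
The plan is to compute the socle via the Koszul complex and compare it to the top of the minimal free resolution using Tor. Let $\underline{x}=x_1,\ldots,x_n$ and let $K_\bullet(\underline{x};S)$ be the Koszul complex, a minimal graded free resolution of $\K$ with $K_i = S(-i)^{\binom{n}{i}}$. The crucial observation is that tensoring with $S/I$ and looking at the top gives $K_n(\underline{x};S/I) = (S/I)(-n)$, with differential $m\,e_1\wedge\cdots\wedge e_n \mapsto \sum_{j=1}^n (-1)^{j-1}(x_j m)\,e_1\wedge\cdots\widehat{e_j}\cdots\wedge e_n$. An element is in the kernel precisely when every $x_j$ annihilates it in $S/I$, so $H_n(K_\bullet(\underline{x};S/I)) \cong \soc(S/I)(-n)$ as graded vector spaces.

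Now I would compute the same homology from the other direction. By symmetry of Tor,
\[
H_n\bigl(K_\bullet(\underline{x};S/I)\bigr) = \mathrm{Tor}_n^S(\K, S/I) = H_n(\mathbb F\otimes_S\K).
\]
Minimality of $\mathbb F$ forces every differential to have entries in $\fm$, so after tensoring with $\K$ all differentials become zero; hence $\mathrm{Tor}_n^S(\K,S/I) = F_n\otimes_S\K = \bigoplus_{i=1}^r \K(-a_i)$. Equating the two expressions for the top Tor gives $\soc(S/I)(-n)\cong \bigoplus_{i=1}^r \K(-a_i)$, and shifting grading by $n$ yields $\soc(S/I)\cong \bigoplus_{i=1}^r \K(-(a_i-n))$, as claimed.

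The only point that deserves care is that the hypothesis $\depth(S/I) = 0$ is used to guarantee $F_n$ is actually the last nonzero free module in $\mathbb F$: Auslander--Buchsbaum gives $\pd(S/I) = n - \depth(S/I) = n$. Without this, the identification still works at the Tor level but would just assert $0 = 0$. Beyond this observation the proof is purely bookkeeping of grading shifts across the isomorphism $H_n(K_\bullet\otimes_S S/I) = \mathrm{Tor}_n^S(\K,S/I) = F_n\otimes_S\K$, which I expect to be the only place one might slip a sign or a twist. No deeper obstacle is anticipated.
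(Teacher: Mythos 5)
Your argument is correct and is the standard proof of this fact. The paper itself does not prove this lemma—it cites it as {\cite[Lemma 1.3]{soclefits}}—so there is no in-paper proof to compare against, but your Koszul/Tor-symmetry computation is exactly the canonical argument one would expect to find in that reference: compute $\mathrm{Tor}_n^S(\K,S/I)$ once via the Koszul complex on $\underline{x}$ tensored with $S/I$ (whose top homology is $\soc(S/I)(-n)$) and once via the minimal free resolution tensored with $\K$ (giving $F_n\otimes_S\K$), then match gradings. Your bookkeeping of the twists is right, and your remark that $\depth(S/I)=0$ is what ensures, via Auslander--Buchsbaum, that $F_n$ is actually the nonzero tail of the resolution is the correct reading of the hypothesis.
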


\section{Monomial Ideals in $\K[x,y,z]$ with a Linear Resolution}\label{sec:linres}
Throughout this section, let $S=\K[x,y,z]$ and $\fm=(x,y,z)$, unless stated otherwise.

\begin{definition}
    Let $I\subseteq S$ be a monomial ideal generated in degree $d$. We say $I$ has a \emph{bad configuration} in $\Delta(d)$ if there exists a nonempty set of degree $d$ monomials $S=\{m_1,\ldots, m_p\}$ not in $I$ such that
    \begin{enumerate}
        \item the set $S$ is the $d$-shadow of a monomial $f$ in $S$. 
        \item The $d$-shadows of $fx,fy$ and $fz$ have a nonempty intersection with $\mathcal{G}(I)$.
    \end{enumerate}
    In this case, we say $f$ induces a bad configuration in $G_I$. 
\end{definition}
\begin{example}
Consider $I=(x^3,x^2y,xy^2,y^3,x^2z,y^2z,xz^2)$ and let $f=xyz$. Then the $3$-shadows of $fx,fy$ and  $fz$ are $\{x^2y,x^2z,xyz\}, \{xy^2,y^2z,xyz\}$ and $\{xz^2, yz^2, xyz\}$ respectively (see \Cref{fig:shadows}). Since each of these shadows intersects $\mathcal G(I)$ nontrivially, $I$ has a bad configuration.
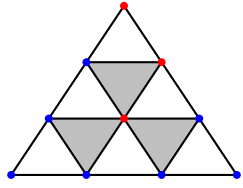
\begin{figure}
    \centering
	\begin{tikzpicture}[scale=0.5][line join = round, line cap = round]
            \fill[lightgray] (-2,3) -- (-1,1.5) -- (0,3);
            \fill[lightgray] (0,3) -- (-1,4.5) -- (1,4.5);
            \fill[lightgray] (0,3) -- (2,3) -- (1,1.5);
            \draw[-, thick]  (-3,1.5) -- (3,1.5) -- (0,6) -- cycle;
            \draw[-, thick]  (-2,3) -- (-1,1.5); 
            \draw[-, thick]  (-1,4.5) -- (1,4.5);
            \draw[-, thick]   (2,3) -- (1,1.5);
            \draw[-, thick]   (2,3) -- (-2,3);
            \draw[-, thick]   (-1,4.5) -- (1,1.5);
            \draw[-, thick]   (1,4.5) -- (-1,1.5);

            \fill[blue] (-3,1.5) circle (3pt) node[below]{} ;
            \fill[blue] (-1,1.5) circle (3pt) node[below]{} ;
            
            \fill[blue] (1,1.5) circle (3pt) node[below]{} ;
            \fill[blue] (3,1.5) circle (3pt) node[below]{} ;
            
            \fill[blue] (-2,3) circle (3pt) node[left]{} ; 
            \fill[red] (0,3) circle (3pt) node[below right]{} ; 
            \fill[blue] (2,3) circle (3pt) node[right]{} ;
            
            \fill[blue] (-1,4.5) circle (3pt) node[left]{} ;
            \fill[red] (1,4.5) circle (3pt) node[right]{} ;
            
            \fill[red] (0,6) circle (3pt) node[above]{};
		\end{tikzpicture}
        \caption{The blue vertices represent the minimal generators of $I=(x^3, x^2y, xy^2, y^3, x^2z, y^2z, xz^2)$. The center red vertex $f=xyz$ induces a bad configuration in $G_I$. The gray upside-down triangles highlight the $d$-shadows of $fx, fy$ and $fz$. }
    \label{fig:shadows}
\end{figure}
\end{example}
\begin{remark}
    It is easy to visually recognize a bad configuration. For example, if $S$ is the $d$-shadow of a monomial $f$, and if the maximum $z$-degree among the monomials in $S$ is $c$, then the $d$-shadow of $fz$ consists of $S$ together with the monomials with $z$-degree $c+1$ that divide $f$. So to find the $d$-shadow of $fz$ on $\Delta(d)$, we just need to look at $(c+1)^{\text{th}}$ level of $\Delta(d)$. Same applies for $fx$ and $fy$ (cf. \Cref{fig:shadows}).
\end{remark}
We now explain the algebraic meaning of a bad configuration. 
\begin{lemma}\label{lem:socle}
    Let $I\subseteq S$ be a monomial ideal generated in degree $d$. Then $I$ has a bad configuration in $\Delta(d)$ if and only if the socle of $S/I$ contains a monomial of degree at least $d$.
\end{lemma}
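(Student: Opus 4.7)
The plan is to show both sides of the equivalence unpack to the same algebraic statement: there exists a monomial $f$ with $\deg f\geq d$, $f\notin I$, but $fx, fy, fz \in I$.

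First I would translate the socle side. Since $\mathfrak m=(x,y,z)$ and $I$ is a monomial ideal, a monomial $f$ represents a nonzero socle element of $S/I$ if and only if $f\notin I$ while $fx, fy, fz\in I$. Thus $\soc(S/I)$ contains a monomial of degree at least $d$ if and only if there exists such an $f$ with $\deg f\geq d$.

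Next I would translate the bad configuration side. Suppose $I$ has a bad configuration with witness shadow $S=\{m_1,\ldots,m_p\}=S_d(f)$. Nonemptiness of $S$ forces $\deg f\geq d$. The condition that $S$ consists of monomials not in $I$ is equivalent to $f\notin I$: if any degree $d$ divisor $m$ of $f$ lay in $I$, then $f=m\cdot(f/m)\in I$; conversely, if $f\in I$ then (since $I$ is generated in degree $d$) some minimal generator of degree $d$ would divide $f$ and so lie in $S_d(f)\cap I$. Condition (2) in the definition—that $S_d(fx)$, $S_d(fy)$, $S_d(fz)$ each meet $\mathcal G(I)$—is in turn equivalent to $fx,fy,fz\in I$: a degree $d$ divisor of $fx$ lies in $\mathcal G(I)$ exactly when $fx$ has some degree $d$ divisor belonging to $I$, which (again because $I$ is equigenerated in degree $d$) holds iff $fx\in I$. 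Conversely, given a monomial $f$ with $\deg f\geq d$, $f\notin I$, and $fx,fy,fz\in I$, the set $S:=S_d(f)$ is a nonempty set of degree $d$ monomials avoiding $I$ (by the argument above), and the shadows of $fx,fy,fz$ each contain a minimal generator of $I$, giving a bad configuration.

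There is no real obstacle here beyond careful bookkeeping; the only subtlety is the repeated use of the hypothesis that $I$ is a monomial ideal generated in the single degree $d$, which allows one to move freely between membership of $fx$ in $I$ and the existence of a degree $d$ divisor of $fx$ in $\mathcal G(I)$. Once both conditions are rephrased as the existence of the monomial $f$ described above, the equivalence is immediate.
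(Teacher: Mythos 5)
Your proof is correct and follows essentially the same approach as the paper's: both arguments observe that a monomial $f$ represents a nonzero socle element of $S/I$ precisely when $f\notin I$ and $fx,fy,fz\in I$, and that these conditions (together with $\deg f\geq d$) unpack exactly to the two conditions in the definition of a bad configuration, using throughout the fact that $I$ is a monomial ideal equigenerated in degree $d$.
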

\begin{proof}
    If $I$ has a bad configuration in $\Delta(d)$, then there exists a nonempty set $M$ of degree $d$ monomials not in $I$ such that $M$ is the $d$-shadow of a monomial $f$ in $I$. Clearly, $f\notin I$ as all the degree $d$ monomials that divide $f$ are outside $I$. Now by assumption, the $d$-shadows of $fx,fy,fz$ intersect $\mathcal G(I)$ nontrivially, so $\fm f\subseteq I$. Hence, $\bar f$, the image of $f$ in $S/I$, belongs to the socle of $S/I$. Finally, the degree of $f$ is at least $d$.

    Conversely, suppose $f$ is a monomial of degree at least $d$ such that $\bar f\in\operatorname{soc}(S/I)$. Since $f\notin I$, the $d$-shadow of $f$ does not intersect $\mathcal{G}(I)$. But $\fm f\subseteq I$, so the $d$-shadows of $fx,fy,fz$ intersect $\mathcal G(I)$ nontrivially. Thus $I$ has a bad configuration in $\Delta(d)$.
\end{proof}
\begin{theorem}\label{th:linreschar}
    Let $I\subset S$ be a monomial ideal generated in degree $d$. Then $I$ has a $d$-linear resolution if and only if $G_I(u,v)$ is connected for all $u,v \in \mathcal G(I)$ and $I$ has no bad configuration in $\Delta(d)$.
\end{theorem}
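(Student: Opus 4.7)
The plan is to deduce \Cref{th:linreschar} from \Cref{thm:linearpres}, \Cref{lem:socle}, and \Cref{lem: socledeg}, together with the Auslander--Buchsbaum formula applied in the three-variable setting. Because $\dim S = 3$, the projective dimension of $I$ is at most $2$, so the minimal free resolution of $I$ has length at most two; moreover $\pd_S(I) = 2$ if and only if $\depth(S/I) = 0$, equivalently $\soc(S/I) \neq 0$. Thus once $I$ is linearly presented, the only remaining obstruction to a linear resolution lies in $F_2(I)$, and this obstruction is only visible when $S/I$ has nontrivial socle.

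For the forward direction, I would assume $I$ has a $d$-linear resolution. Then $I$ is linearly presented, so $G_I(u,v)$ is connected for every pair $u,v \in \mathcal G(I)$ by \Cref{thm:linearpres}. If $\depth(S/I) \geq 1$, then $\soc(S/I) = 0$ and \Cref{lem:socle} immediately rules out a bad configuration. If $\depth(S/I) = 0$, then $F_2(I) = S(-d-2)^{\beta_2(I)}$, which becomes the last term of the resolution of $S/I$ with the same shift; by \Cref{lem: socledeg} the socle of $S/I$ is concentrated in degree $d-1$. In particular no socle monomial has degree $\geq d$, and \Cref{lem:socle} again precludes a bad configuration.

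For the converse, suppose $G_I(u,v)$ is connected for every pair and $I$ admits no bad configuration. Connectivity and \Cref{thm:linearpres} give linear presentation, so $F_1(I) = S(-d-1)^{\beta_1(I)}$. If $\depth(S/I) \geq 1$ then $F_2(I) = 0$ and the resolution is automatically $d$-linear. Otherwise $\depth(S/I) = 0$ and $\soc(S/I) \neq 0$. The key elementary observation is that every monomial $g \in \soc(S/I)$ satisfies $\deg g \geq d - 1$: each $x_i g$ lies in $I$, but $I$ is generated in degree $d$, forcing $\deg g + 1 \geq d$. Combined with \Cref{lem:socle}, the absence of a bad configuration pins $\soc(S/I)$ to degree exactly $d-1$. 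Applying \Cref{lem: socledeg} in reverse, all back twists in the top module of the minimal free resolution of $S/I$ equal $d + 2$, so $F_2(I) = S(-d-2)^{\beta_2(I)}$ and the resolution of $I$ is $d$-linear.

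The main conceptual ingredient is the elementary but crucial degree bound $\deg g \geq d-1$ on socle monomials, which together with \Cref{lem: socledeg} converts the combinatorial statement ``no bad configuration'' into the algebraic statement ``$F_2(I)$ is generated in degree $d+2$''. Everything else is a direct combination of results already recalled in \Cref{sec: prelim}, plus the depth dichotomy that is special to three variables.
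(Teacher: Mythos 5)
Your proof is correct and follows essentially the same route as the paper: both reduce via \Cref{thm:linearpres} to the question of socle degrees, split on $\depth(S/I)$, use \Cref{lem:socle} to translate the bad-configuration condition into a statement about socle degrees $\geq d$, and use \Cref{lem: socledeg} to connect socle degrees to the back twists. You spell out two minor steps that the paper leaves implicit (the elementary bound $\deg g \geq d-1$ on socle monomials, and the explicit use of Auslander--Buchsbaum for the depth/projective dimension dichotomy), but the argument is structurally identical.
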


\begin{proof}
    Suppose first that $I$ has a $d$-linear resolution. Then $I$ has a linear presentation, so $G_I(u,v)$ is connected for all $u,v\in \mathcal G(I)$. If $\pd I=1$, then $I$ has no bad configuration in $\Delta(d)$ by \Cref{lem:socle}, so we are done. If $\pd I=2$, then since $I$ has $d$-linear resolution, the socle of $S/I$ is concentrated in degree $d-1$ (\Cref{lem: socledeg}), so we are again done by \Cref{lem:socle}.

    Conversely, suppose $G_I(u,v)$ is connected for all $u,v\in \mathcal G(I)$ and $I$ has no bad configurations in $\Delta(d)$. If $\pd(I)\leq 1$, then it is clear that $I$ has a $d$-linear resolution. So we suppose $\pd(I)=2$, so $S/I$ has a nonzero socle. We also know that the initial degree of the socle of $S/I$ is at least $d-1$. If $S/I$ has a socle element of degree at least $d$, then $I$ has a bad configuration in $\Delta(d)$ by \Cref{lem:socle}. Thus all the socle elements must have degree $d-1$, so $I$ has a $d$-linear resolution.
\end{proof}

\begin{example}
    Let $I=(x^3,x^2y,xy^2,y^3,x^2z,y^2z,xz^2,yz^2,z^3)$ be the \emph{pinched power ideal} obtained by removing $xyz$ from the generators of $\fm^3$. It is easy to see from the dual graph of $I$ that $I$ is linearly presented. But since $I$ has a bad configuration induced by $xyz$, $I$ does not have a linear resolution. On the other hand, $J=(x^3,x^2y,xy^2,y^3,x^2z,y^2z)$ has a linear resolution (see \Cref{fig:linresexample}). 
    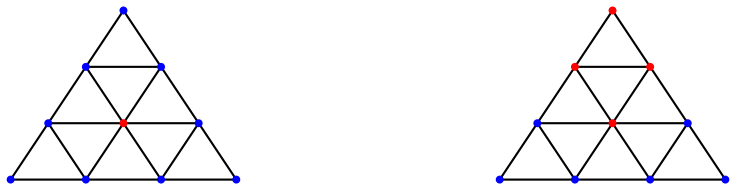
\begin{figure}[hbt]
    \begin{multicols}{2}
    \begin{center}
			\begin{tikzpicture}[scale=0.5][line join = round, line cap = round]

				\draw[-, thick]  (-3,1.5) -- (3,1.5) -- (0,6) -- cycle;
				\draw[-, thick]  (-2,3) -- (-1,1.5); 
				\draw[-, thick]  (-1,4.5) -- (1,4.5);
				\draw[-, thick]   (2,3) -- (1,1.5);
                    \draw[-, thick]   (2,3) -- (-2,3);
                    \draw[-, thick]   (-1,4.5) -- (1,1.5);
                    \draw[-, thick]   (1,4.5) -- (-1,1.5);

				\fill[blue] (-3,1.5) circle (3pt) node[below] {};
				\fill[blue] (-1,1.5) circle (3pt) node[below] {};
				
				\fill[blue] (1,1.5) circle (3pt) node[below] {};
				\fill[blue] (3,1.5) circle (3pt) node[below] {};
				
				\fill[blue] (-2,3) circle (3pt) node[left] {}; 
                \fill[red] (0,3) circle (3pt) node[below right] {}; 
				\fill[blue] (2,3) circle (3pt) node[right] {};
				
				\fill[blue] (-1,4.5) circle (3pt) node[left] {};
				\fill[blue] (1,4.5) circle (3pt) node[right] {};
				
				\fill[blue] (0,6) circle (3pt) node[above] {};

			\end{tikzpicture}
		\end{center}

        \columnbreak 

        \begin{center}
			\begin{tikzpicture}[scale=0.5][line join = round, line cap = round]

				\draw[-, thick]  (-3,1.5) -- (3,1.5) -- (0,6) -- cycle;
				\draw[-, thick]  (-2,3) -- (-1,1.5); 
				\draw[-, thick]  (-1,4.5) -- (1,4.5);
				\draw[-, thick]   (2,3) -- (1,1.5);
                    \draw[-, thick]   (2,3) -- (-2,3);
                    \draw[-, thick]   (-1,4.5) -- (1,1.5);
                    \draw[-, thick]   (1,4.5) -- (-1,1.5);

				\fill[blue] (-3,1.5) circle (3pt) node[below] {};
				\fill[blue] (-1,1.5) circle (3pt) node[below] {};
				
				\fill[blue] (1,1.5) circle (3pt) node[below] {};
				\fill[blue] (3,1.5) circle (3pt) node[below] {};
				
				\fill[blue] (-2,3) circle (3pt) node[left] {}; 
                \fill[red] (0,3) circle (3pt) node[below right] {}; 
				\fill[blue] (2,3) circle (3pt) node[right] {};
				
				\fill[red] (-1,4.5) circle (3pt) node[left] {};
				\fill[red] (1,4.5) circle (3pt) node[right] {};
				
				\fill[red] (0,6) circle (3pt) node[above] {};

			\end{tikzpicture}
		\end{center}
        
    \end{multicols}
        \caption{The dual graphs of $I$ and $J$.}
        \label{fig:linresexample}
        \end{figure}
\end{example}

We will now use \Cref{th:linreschar} to  show that in three variables, equigenerated ideals with linear resolutions have linear quotients. More precisely: 
\begin{theorem} 
    \label{th:main}
    Let $I\subset \K[x,y,z]$ be a monomial ideal generated in degree $d$ with a linear resolution. Then there is an ordering of its minimal generators such that $I$ has linear quotients with respect to the ordering.
\end{theorem}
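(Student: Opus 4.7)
My plan is to prove the theorem by induction on the number of minimal generators $r=|\mathcal{G}(I)|$; the base case $r=1$ is trivial, so I focus on the inductive step. It suffices to find a generator $m \in \mathcal{G}(I)$ such that (a) the ideal $J$ generated by $\mathcal{G}(I)\setminus\{m\}$ still has a linear resolution and (b) $J:m$ is generated by variables. The inductive hypothesis applied to $J$ then produces a linear-quotients ordering, and appending $m$ yields one for $I$.

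For the choice of $m$, I propose to take $m=x^ay^bz^c$ where $b=\max\{y\text{-deg}(u):u\in\mathcal{G}(I)\}$ and, among generators of $y$-degree $b$, $a$ is maximal. The crucial property this achieves is that no other generator of $I$ divides $ym$: any such divisor must have $y$-degree at most $b$ and total degree $d$, forcing its $x$- and $z$-degrees to match those of $m$. Thus $ym\notin J$.

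To prove (b), fix $m_j\in\mathcal{G}(J)$ and let $w:=m_j/\gcd(m,m_j)$. By the connectedness part of \Cref{th:linreschar}, the induced subgraph $G_I(m,m_j)$ is connected, so it contains a neighbor $n$ of $m$, of the form $n=mv_+/v_-$ for variables $v_+\neq v_-$ with $v_-\mid m$ and $v_+\text{-deg}(m_j)>v_+\text{-deg}(m)$. The maximality of $b$ excludes $v_+=y$, so $v_+\in\{x,z\}$ and in particular $v_+\mid w$. Since $n\in\mathcal{G}(I)\setminus\{m\}\subseteq\mathcal{G}(J)$ and $n\mid v_+m$, we obtain $v_+m\in J$ and hence $v_+\in J:m$. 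Therefore for every $m_j$ some variable divisor of $w$ lies in $J:m$, showing $J:m$ is generated by variables.

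For (a), I again invoke \Cref{th:linreschar}. Ruling out bad configurations of $J$ proceeds by cases. If a bad configuration $f$ were outside $I$, then $\fm f\subseteq J\subseteq I$ would make $f$ a bad configuration of $I$, contradicting linear resolution. If instead $f\in I\setminus J$, then $m\mid f$ and no other generator divides $f$; but if $yf\in J$ via some $m_k\in\mathcal{G}(J)$, then $m_k\mid yf$ and $m_k\nmid f$ together force $y\text{-deg}(m_k)>y\text{-deg}(f)\geq b$, contradicting the maximality of $b$, so $yf\notin J$. The delicate step is verifying that $G_J(u,v)$ is connected for all $u,v\in\mathcal{G}(J)$. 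This can only fail when $m\mid \lcm(u,v)$, in which case the extremality of $m$ forces (without loss of generality) $u$ to have $y$-degree equal to $b$, $x$-degree strictly less than $a$, and $z$-degree strictly greater than $c$. The task is then to show that $m$ is not a cut vertex of $G_I(u,v)$ by exhibiting an alternative route from $u$ to $v$ through generators of strictly smaller $y$-degree, and this is the main obstacle; I expect the argument to combine the connectedness of other subgraphs $G_I(u',v')$ with the no-bad-configuration hypothesis on $I$ to locate the required bypass generators.
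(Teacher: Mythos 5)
Your overall strategy—peel off a ``corner'' generator $m$, apply \Cref{th:linreschar} to show $J=(\mathcal G(I)\setminus\{m\})$ again has a linear resolution, and check that $J:m$ is generated by variables—is a genuinely different route from the paper. The paper instead fixes a global ``tree ordering'' of $\mathcal G(I)$ and, using \Cref{prop:linearquotient}, argues by contradiction that every prefix ideal $I_j$ must be linearly presented, producing a bad configuration if not. Your choice of $m$ (maximal $y$-degree, then maximal $x$-degree) is a sensible extremal choice, and several pieces of your outline are correct as written: the observation that no other generator divides $ym$, the argument that $J:m$ is generated by variables via a neighbor $n=mv_+/v_-$ of $m$ in the connected graph $G_I(m,m_j)$ with $v_+\neq y$, and the argument that $J$ has no bad configuration (both the case $f\notin I$, which pushes forward to a bad configuration of $I$, and the case $f\in I\setminus J$, which forces $yf\notin J$).

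However, there is a genuine gap, which you yourself flag: you do not prove that $G_J(u,v)$ is connected for all $u,v\in\mathcal G(J)$, i.e., that $m$ is not a cut vertex of $G_I(u,v)$ whenever $m\mid\lcm(u,v)$. This is not a routine detail—it is precisely the step that carries the difficulty of the theorem. The potential neighbors of $m$ in $G_I$ are $mx/y$, $mz/x$, $mz/y$ (the other three are excluded by extremality of $b$ and $a$), but not all of these need lie in $G_I(u,v)$, and showing that the ones that do are mutually reachable without passing through $m$ requires a careful case analysis and, ultimately, the no-bad-configuration hypothesis on $I$ in an essential way. The paper's proof of \Cref{th:main} devotes essentially all of its length (Claim 1, the reduction to $c_2-c_1=1$, and the two cases, each culminating in the construction of a monomial $u$ inducing a bad configuration) to an analogue of exactly this step. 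Until you supply this argument, the induction does not close, so the proposal as it stands does not constitute a proof.
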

To prove Theorem \ref{th:main}, we first provide an ordering of the minimal generators of $I$ as follows. 
\begin{construction}[{\bf Tree ordering}] \label{con:tree}  
Let $I\subset \K[x,y,z]$ be a monomial ideal generated in degree $d$ with a linear presentation. We say that a monomial is on the \emph{$c^{\rm th}$ level} if its $z$-degree is $c$. Since $I$ is linearly presented, the minimal generators of $I$ are on consecutive levels.  Assume that minimal generators of $I$ are grouped into $p$ levels with $z$-degrees given by $c,c+1,\cdots,c+(p-1)$. We order the minimal generators of $I$ as follows: 
\begin{itemize}
    \item [\textbf{Step 0.}] For all minimal generators on the $c^{\rm th}$ level, we order  them from left to right (see \Cref{eg:conv} for our conventions), that is, $m_{0,1},m_{0,2},\dots,m_{0,t_0}$, where the $x$-degrees are decreasing, i.e., 
    \[
    \deg_x(m_{0,1})> \deg_x(m_{0,2})> \cdots >\deg_x(m_{0,t_0}).
    \]

    \item [\textbf{Step 1.}] Inductively, for $0<i<p$, generators on the ${(c+i)}^{\rm th}$ level are ordered as follows: denote by $m_{i,1}$ the minimal generator of $I$ with the highest $x$-degree such that \emph{it is joined to $m_{i-1,j}$} by an edge for some $j$. For the generators on the ${(c+i)}^{\rm th}$ level such that their $x$-degrees are less than the $x$-degree of $m_{i,1}$, we order them from left to right. Then we order the remaining generators on the ${(c+i)}^{\rm th}$ level from right to left, that is, the $x$-degrees are increasing.
\end{itemize}
We call this ordering the \emph{tree ordering} of minimal generators of $I$. 
\end{construction}
\begin{example}
    The minimal generators of $\fm^3$ listed in tree ordering are as follows: $x^3, x^2y, xy^2,y^3, x^2z, xyz, y^2z, xz^2, yz^2, z^3$ (see \Cref{f:d=3}). The blue vertices in \Cref{fig:deg4eg} listed in tree ordering are $xy^3, xy^2z, y^3z, x^2yz, x^3z, x^2z^2, y^2z^2$. Note that the ideal $I$ generated by these monomials has linear resolution.
    \begin{figure}[hbt]
        \centering
        \begin{tikzpicture}[scale=0.6, line join=round, line cap=round]
          \draw[thick] (-4,2) -- (4,2) -- (0,8) -- cycle;
          \draw[thick] (-3,3.5) -- (3,3.5);
          \draw[thick] (-2,5) -- (2,5);
          \draw[thick] (-1,6.5) -- (1,6.5);
          \draw[thick] (-3,3.5) -- (-2,2);
          \draw[thick] (-2,5)   -- (0,2);
          \draw[thick] (-1,6.5) -- (2,2);
          \draw[thick] (3,3.5) -- (2,2);
          \draw[thick] (2,5)   -- (0,2);
          \draw[thick] (1,6.5) -- (-2,2);
          \fill[red] (-4,2) circle (3pt) node[below] {$x^4$};
          \fill[red] (-2,2) circle (3pt) node[below] {$x^3y$};
          \fill[red] (0,2)  circle (3pt) node[below] {$x^2y^2$};
          \fill[blue] (2,2)  circle (3pt) node[below] {$xy^3$};
          \fill[red] (4,2)  circle (3pt) node[below] {$y^4$};
          \fill[blue] (-3,3.5) circle (3pt) node[left] {$x^3z$};
          \fill[blue] (-1,3.5) circle (3pt) node[below] {$x^2yz$};
          \fill[blue] (1,3.5) circle (3pt) node[below] {$xy^2z$};
          \fill[blue] (3,3.5) circle (3pt) node[right] {$y^3z$};
          \fill[blue] (-2,5) circle (3pt) node[left] {$x^2z^2$};
          \fill[red] (0,5)  circle (3pt) node[below] {$xyz^2$};
          \fill[blue] (2,5)  circle (3pt) node[right] {$y^2z^2$};
          \fill[red] (-1,6.5) circle (3pt) node[left] {$xz^3$};
          \fill[red] (1,6.5) circle (3pt) node[right] {$yz^3$};
          \fill[red] (0,8) circle (3pt) node[above] {$z^4$};
        \end{tikzpicture}
        \caption{The dual graph of $I=(xy^3, xy^2z, y^3z, x^2yz, x^3z, x^2z^2, y^2z^2)$.}
        \label{fig:deg4eg}
    \end{figure}
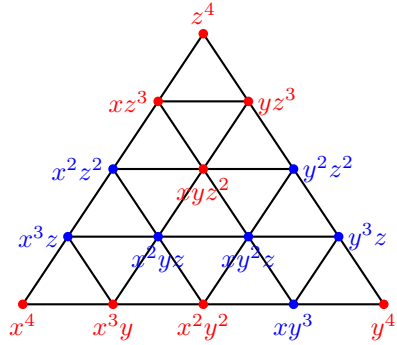
\end{example}

The following result is a crucial ingredient in the proof of Theorem \ref{th:main}. 

\begin{proposition} \label{prop:linearquotient}
    Let $I\subseteq \K[x_1,\ldots, x_n]$ be a monomial ideal generated in degree $d$, and suppose $\mathcal{G}(I)=\{m_1,\ldots, m_r\}$. Then the following are equivalent:
    \begin{enumerate}
        \item $I$ has linear quotients with respect to $m_1,\ldots, m_r$.
        \item The ideals $I_j=(m_1,\ldots, m_j)$ have linear resolution for $j=1,\ldots, r$.
        \item The ideals $I_j=(m_1,\ldots, m_j)$ are linearly presented for $j=1,\ldots, r$. 
    \end{enumerate}
\end{proposition}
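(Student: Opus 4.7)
The plan is to prove the cyclic chain $(1) \Rightarrow (2) \Rightarrow (3) \Rightarrow (1)$. The first two arrows are essentially formal, so the real content sits in $(3) \Rightarrow (1)$, where \Cref{thm:linearpres} is the key input.

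For $(1) \Rightarrow (2)$, I would observe that the definition of linear quotients is hereditary with respect to initial segments of the ordering: the colons $(m_1,\dots,m_{i-1}):m_i$ witnessing linear quotients for $I$ simultaneously witness linear quotients for each $I_j=(m_1,\dots,m_j)$ with respect to the truncated ordering $m_1,\dots,m_j$. Since each $I_j$ is equigenerated in degree $d$, having linear quotients forces it to be componentwise linear and hence to have a linear resolution, by the results of Sharifan--Varbaro and Jahan--Zheng cited in the introduction. The implication $(2) \Rightarrow (3)$ is immediate, since linear presentation is simply linearity at the first homological step.

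For $(3) \Rightarrow (1)$, fix $j \geq 2$ and set $J = (m_1,\dots,m_{j-1}) : m_j$, whose monomial generators are the quotients $m_i/\gcd(m_i,m_j)$ for $i<j$. The goal is to show that every minimal monomial generator of $J$ is a single variable. Let $f = m_i/\gcd(m_i,m_j)$ be such a minimal generator. Because $I_j$ is linearly presented, \Cref{thm:linearpres} applied to the pair $m_i,m_j$ produces a path $m_i=h_0,h_1,\dots,h_s=m_j$ in $G_{I_j}(m_i,m_j)$, along which each $h_k \in \mathcal G(I_j)$ divides $\lcm(m_i,m_j)$. Focus on the last edge $\{h_{s-1},m_j\}$: by the definition of the dual graph, $x \defeq h_{s-1}/\gcd(h_{s-1},m_j)$ is a single variable, and writing $h_{s-1}=m_{i'}$ with $i'<j$, this variable lies in $(m_{i'}):m_j \subseteq J$. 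A one-line degreewise comparison, using $h_{s-1} \mid \lcm(m_i,m_j)$, shows that $x$ also divides $f = \lcm(m_i,m_j)/m_j$. Minimality of $f$ then forces $f=x$, so $f$ is a variable, completing the step.

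The main subtlety I anticipate is the final divisibility claim $x \mid f$: this is the mechanism by which the variable produced at the terminal edge of the connecting path is tied to the \emph{specific} minimal generator $f$ under consideration, rather than merely exhibiting \emph{some} variable in $J$. Once this coupling is in place, the rest is bookkeeping. I also note that the three-variables hypothesis of the paper is not used here, consistent with the statement being formulated for $\K[x_1,\dots,x_n]$.
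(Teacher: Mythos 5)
Your proposal is correct, and the argument is sound throughout. The divisibility step you flag as the main subtlety does hold: since $\{h_{s-1},m_j\}$ is a dual-graph edge, $\lcm(h_{s-1},m_j)=x_t\,m_j$ for a single variable $x_t=h_{s-1}/\gcd(h_{s-1},m_j)$, and then $h_{s-1}\mid\lcm(m_i,m_j)$ forces $\deg_{x_t}(m_i)>\deg_{x_t}(m_j)$, so $x_t$ divides $f=m_i/\gcd(m_i,m_j)$; coupled with $x_t\in(m_{i'}):m_j\subseteq J$ and minimality of $f$, this gives $f=x_t$.

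The route, however, is genuinely different from the paper's, which proves none of the three implications directly: it cites \cite[Lemma~2.5]{Herzog-Moradi-Rahimbeigi} for $(1)\Rightarrow(2)$, calls $(2)\Rightarrow(3)$ trivial, and cites \cite[Proposition~5.6]{DaoEisenbud} (or \cite[Lemma~3.1]{DaoDoolittleLyle}) for $(3)\Rightarrow(1)$. You instead derive $(1)\Rightarrow(2)$ from the hereditary nature of the linear-quotients ordering together with the componentwise-linearity consequence already recalled in the paper's introduction, and you give a self-contained combinatorial proof of $(3)\Rightarrow(1)$ directly from \Cref{thm:linearpres}. What your approach buys is transparency: the reader sees precisely why connectedness of $G_{I_j}(m_i,m_j)$ manufactures a variable in $J=I_{j-1}:m_j$ dividing each putative minimal generator of $J$, with the terminal edge of a connecting path doing the work. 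What the paper's approach buys is brevity, outsourcing the nontrivial implication to references where it is proved once and reused. Your argument is in effect a re-derivation of the cited lemma, and it correctly stays valid in $\K[x_1,\dots,x_n]$ for arbitrary $n$, as the statement requires.
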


\begin{proof}
    (1) implies (2) is \cite[Lemma 2.5]{Herzog-Moradi-Rahimbeigi}. (2) implies (3) is trivial. (3) implies (1) is {\cite[Proposition 5.6]{DaoEisenbud}} or {\cite[Lemma 3.1]{DaoDoolittleLyle}}. 
\end{proof}

For the remainder of the section, let $I\subseteq \K[x,y,z]$ be a monomial ideal generated in degree $d$ having a linear resolution, and suppose $\mathcal{G}(I)=\{m_1,\ldots, m_r\}$ with $m_1,\ldots, m_r$ listed in tree order.

To prove Theorem \ref{th:main}, we will also make use of the following two lemmas.                                                 
\begin{lemma}\label{lem:sameLevel}
    Let $I_j=(m_1,\ldots, m_j)$ for $2\leq j \leq r$. If $u,v\in \mathcal G(I_j)$ are on the same level, then $G_{I_j}(u,v)$ is connected.
\end{lemma}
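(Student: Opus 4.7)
The plan is to prove the stronger assertion that $V(G_{I_j}(u,v))=V(G_I(u,v))$. Since both $I$ and $I_j$ are generated in degree $d$, the edge condition $\deg \lcm(f,g)=d+1$ is identical in $G_I$ and $G_{I_j}$, so $G_{I_j}$ is the induced subgraph of $G_I$ on $\mathcal{G}(I_j)$; in particular, once the vertex sets coincide, so do the induced subgraphs $G_{I_j}(u,v)$ and $G_I(u,v)$. Because $I$ has a linear resolution, $I$ is linearly presented, so \Cref{thm:linearpres} yields that $G_I(u,v)$ is connected, which then gives the lemma.

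Writing $u=x^P y^{\bar P} z^{c+i}$ and $v=x^Q y^{\bar Q} z^{c+i}$ with $P>Q$, I first carry out the elementary calculation that any degree $d$ monomial $h=x^{p'}y^{q'}z^{r'}$ dividing $\lcm(u,v)=x^P y^{\bar Q} z^{c+i}$ satisfies $r'\le c+i$, and, if $r'=c+i-\ell$, then $Q+\ell\le p'\le P$. In particular, the vertices of $G_I(u,v)$ on level $c+i$ are exactly the generators of $I$ with $x$-degree in $[Q,P]$, and on each lower level $c+i-\ell$ they are the generators of $I$ with $x$-degree in $[Q+\ell,P]$. Since the tree ordering of \Cref{con:tree} fills levels $c,c+1,\ldots$ from bottom to top, every element of $\mathcal{G}(I)$ on a level strictly below $c+i$ is automatically in $\mathcal{G}(I_j)$. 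So the only remaining task is to verify that every generator of $I$ on level $c+i$ with $x$-degree in $[Q,P]$ lies in $\mathcal{G}(I_j)$.

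Let $k=\deg_x m_{i,1}$. On level $c+i$, the tree ordering first records $m_{i,1}$ at $x$-degree $k$, then lists the generators with $x$-degree less than $k$ in decreasing order of $x$-degree (closest to $k$ first), and finally lists those with $x$-degree greater than $k$ in increasing order (again closest to $k$ first). I will then run a case analysis on the positions of $P$ and $Q$ relative to $k$: the three cases $k\le Q<P$, $Q<P\le k$, and $Q<k<P$ cover all possibilities. In each case, the monotone filling within the appropriate phase(s) forces every generator of $I$ on level $c+i$ lying between $u$ and $v$ in $x$-degree to already be in $\mathcal{G}(I_j)$ by the time both $u$ and $v$ are.

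The main obstacle is the asymmetric case $Q<k<P$, where $u$ sits in the left phase and $v$ in the right phase. The crucial point is that the left phase only begins once the right phase has been completed, so as soon as $u\in \mathcal{G}(I_j)$, every right-phase generator (in particular all those with $x$-degree in $[Q,k-1]$) automatically lies in $\mathcal{G}(I_j)$, together with $m_{i,1}$ itself and every left-phase generator with $x$-degree in $[k+1,P]$. The other two cases follow directly from the monotone filling inside a single phase. This yields $V(G_I(u,v))\subseteq \mathcal{G}(I_j)$, hence the desired equality $V(G_{I_j}(u,v))=V(G_I(u,v))$, and the lemma follows.
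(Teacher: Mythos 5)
Your proof is correct and follows essentially the same route as the paper's: reduce to showing $V(G_{I_j}(u,v))=V(G_I(u,v))$, note that every lower level is entirely contained in $\mathcal G(I_j)$ because the tree ordering fills levels from bottom to top, and handle the common level via the monotone structure of \Cref{con:tree}, then invoke connectedness of $G_I(u,v)$ from the linear presentation of $I$. The only difference is one of detail: where the paper compresses the same-level step into the assertion that such an $m$ ``comes in between $u$ and $v$ in tree ordering,'' you spell out the three-way case split on the position of the pivot $x$-degree $k=\deg_x m_{i,1}$ relative to $P$ and $Q$, which is a more careful account of the same observation (and correctly handles the subtle case $Q<k<P$, where $m$ need not lie strictly between $u$ and $v$ in the ordering but still precedes the later of the two).
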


\begin{proof}
    By \Cref{th:linreschar}, it suffices to show that $G_I(u,v)=G_{I_j}(u,v)$. Suppose $u$ and $v$ are on the  $c^{\rm th}$ level, and let $m\in G_I(u,v)$. Then $\deg_z(m)\leq c$. If $\deg_z(m)< c$, then $m\in I_j$ by construction. If $\deg_z(m)=c$, then $m$ comes in between $u$ and $v$ in tree ordering, and hence $m\in I_j$ again by construction.    
\end{proof}
\begin{lemma}\label{lem:disconnected}
    If $I_j$ is linearly presented for all $j<\ell$ and $I_\ell$ is not linearly presented, then $G_{I_\ell}(m_i,m_\ell)$ is disconnected for some $i\leq \ell-1$. 
\end{lemma}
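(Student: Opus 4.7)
The plan is to combine the characterization of linear presentation in \Cref{thm:linearpres} with a simple monotonicity observation about the dual graph: whether two monomials $\{f,g\}$ form an edge depends only on the pair $(f,g)$ (via $\deg \lcm(f,g) = \deg f + 1 = \deg g + 1$), not on the ambient ideal. So enlarging the generating set can only add vertices and edges to a relevant induced subgraph, never remove them.

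First, since $I_\ell$ is not linearly presented, \Cref{thm:linearpres} produces some $u,v\in\mathcal G(I_\ell)$ for which $G_{I_\ell}(u,v)$ is disconnected. I would then show that we cannot have both $u,v\in\mathcal G(I_{\ell-1})$. Indeed, $V(G_{I_\ell}(u,v))$ and $V(G_{I_{\ell-1}}(u,v))$ are the sets of elements of $\mathcal G(I_\ell)$ and $\mathcal G(I_{\ell-1})$, respectively, dividing $\lcm(u,v)$, and since $\mathcal G(I_\ell)=\mathcal G(I_{\ell-1})\cup\{m_\ell\}$, the two vertex sets differ by at most the single vertex $m_\ell$. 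By the edge-independence observation above, $G_{I_{\ell-1}}(u,v)$ is the induced subgraph of $G_{I_\ell}(u,v)$ on $V(G_{I_\ell}(u,v))\setminus\{m_\ell\}$.

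Now, by hypothesis $I_{\ell-1}$ is linearly presented, so by \Cref{thm:linearpres} $G_{I_{\ell-1}}(u,v)$ is connected, meaning $u$ and $v$ lie in a common component; since $G_{I_\ell}(u,v)$ contains this subgraph (only possibly adding the vertex $m_\ell$ and edges incident to it), $u$ and $v$ also lie in a common component of $G_{I_\ell}(u,v)$. If $u,v\in\mathcal G(I_{\ell-1})$, this contradicts the assumption that $G_{I_\ell}(u,v)$ is disconnected. Hence at least one of $u,v$ equals $m_\ell$; say $u=m_\ell$ and $v=m_i$ with $i\le \ell-1$, which is exactly the conclusion.

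There is no real obstacle here: the proof is essentially the observation that passing from $I_{\ell-1}$ to $I_\ell$ only enriches the dual graph, so any new disconnection phenomenon must involve the new generator $m_\ell$. The only point to be careful about is to formulate precisely the statement that $G_{I_{\ell-1}}(u,v)$ sits inside $G_{I_\ell}(u,v)$ as an induced subgraph obtained by deleting at most one vertex, which is immediate from the definitions of the dual graph and of $G_I(f,g)$ given in the preliminaries.
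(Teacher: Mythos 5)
Your argument contains a genuine logical gap. You claim that ``$u$ and $v$ lie in a common component of $G_{I_\ell}(u,v)$'' contradicts ``$G_{I_\ell}(u,v)$ is disconnected'' when $u,v\in\mathcal G(I_{\ell-1})$. It does not: a graph is disconnected whenever it has two or more components, not merely when two \emph{specified} vertices are separated. When $m_\ell\mid\lcm(u,v)$, the vertex $m_\ell$ is added to $G_{I_{\ell-1}}(u,v)$, and it can perfectly well be an \emph{isolated} vertex of $G_{I_\ell}(u,v)$ while $u$ and $v$ remain joined by a path through the old vertices. In that situation $G_{I_\ell}(u,v)$ is disconnected even though $u,v\in\mathcal G(I_{\ell-1})$, so your conclusion ``hence at least one of $u,v$ equals $m_\ell$'' does not follow.

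The approach is salvageable, and this is where the paper's argument is cleaner. If $u,v\in\mathcal G(I_{\ell-1})$ and $G_{I_\ell}(u,v)$ is disconnected, then, as you observe, $G_{I_{\ell-1}}(u,v)$ is a connected spanning subgraph of $G_{I_\ell}(u,v)$ minus $m_\ell$, so the only way $G_{I_\ell}(u,v)$ can fail to be connected is for $m_\ell$ to be isolated. But then, since $m_\ell\mid\lcm(u,v)$, we have $\lcm(u,m_\ell)\mid\lcm(u,v)$, so $G_{I_\ell}(u,m_\ell)$ is an induced subgraph of $G_{I_\ell}(u,v)$ containing $m_\ell$; hence $m_\ell$ is isolated there too, and $G_{I_\ell}(u,m_\ell)$ is disconnected with $u=m_i$, $i\leq\ell-1$. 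Adding this paragraph closes the gap. The paper sidesteps the issue entirely by arguing contrapositively: it assumes $G_{I_\ell}(u,m_\ell)$ is connected for every $u\in\mathcal G(I_{\ell-1})$ and uses the containment $G_{I_\ell}(u,m_\ell)\subseteq G_{I_\ell}(u,v)$ to link $m_\ell$ into the already-connected $G_{I_{\ell-1}}(u,v)$, so that $G_{I_\ell}(u,v)$ is connected for all pairs. That phrasing handles the potentially isolated $m_\ell$ automatically, which is precisely the case your version misses.
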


\begin{proof}
 Suppose on the contrary that $G_{I_\ell}(u,m_\ell)$ is connected for all $u\in \mathcal G(I_{\ell-1})$. Let $u,v \in \mathcal G(I_\ell)$ and consider $G_{I_\ell}(u,v)$. If $V(G_{I_\ell}(u,v))=V(G_{I_{\ell-1}}(u,v))$, then $G_{I_\ell}(u,v)=G_{I_{\ell-1}}(u,v)$ is connected. Otherwise, $V(G_{I_\ell}(u,v))=V(G_{I_{\ell-1}}(u,v))\cup \{m_\ell\}$, which implies $m_\ell | \lcm (u,v)$. Therefore, $\lcm(u,m_\ell)| \lcm(u,v)$, and hence $G_{I_\ell}(u,m_\ell)\subseteq G_{I_\ell}(u,v)$. As $G_{I_\ell}(u,m_\ell)$ is connected, there is a path in $G_{I_\ell}(u,v)$ from $u$ to $m_\ell$. But this implies $m_\ell$ is connected to every other vertex in $G_{I_\ell}(u,v)$, so $G_{I_\ell}(u,v)$ is connected.  Thus, $G_{I_\ell}(u,v)$ is connected for all $u,v\in \mathcal G(I_\ell)$, so $I_\ell$ is linearly presented, a contradiction.
\end{proof}
Now we are able to prove Theorem \ref{th:main}.

\begin{proof}[Proof of Theorem \ref{th:main}]
     We order the minimal generators of $I$ as in Construction \ref{con:tree}, and then by renumbering, assume that $I=(m_1,\dots,m_r)$. By \Cref{prop:linearquotient}, it suffices to show that $I_j=(m_1,\ldots,m_j)$ is linearly presented for all $j$. 
     By way of contradiction, suppose not. Let $\ell$ be the smallest index such that $I_\ell$ is \emph{not} linearly presented. \Cref{lem:disconnected} implies that $G_{I_\ell}(m_i,m_{\ell})$ is disconnected for some $i\leq \ell-1$. Let $k$ be the greatest among such $i$. 

    \noindent\textbf{Claim 1.} We claim that $m_s\notin G_{I_\ell}(m_k,m_\ell)$ for all $k<s<\ell$. Indeed, if $m_{s}\in G_{I_\ell}(m_k,m_\ell)$, then $G_{I_\ell}(m_k,m_{s})\subseteq G_{I_\ell}(m_k,m_\ell)$. Since $G_{I_s}(m_k,m_s)$ and $G_{I_\ell}(m_s,m_\ell)$ are connected, there is a path from $m_k$ to $m_\ell$ via $m_{s}$ in $G_{I_\ell}(m_k,m_\ell)$, a contradiction.
    
    Assume that $m_k=x^{a_1}y^{b_1}z^{c_1}$ and $m_\ell=x^{a_2}y^{b_2}z^{c_2}$. As $k<\ell$, $c_1\leq c_2$. \Cref{lem:sameLevel} implies that $c_1<c_2$. Next, we prove that $c_2-c_1=1$. Suppose that $c_2-c_1>1$. Since $G_{I}(m_k,m_{\ell})$ is connected, there exists a path connecting $m_k$ and $m_{\ell}$, so there is some $m_s=x^{a}y^{b}z^{c_2-1}$ in $G_I(m_k, m_\ell)$. We have $k<s<\ell$, so $m_s\in I_{\ell}$, and thus $m_s\in G_{I_{\ell}}(m_k,m_{\ell})$, a contradiction to Claim 1. Thus $c_2-c_1=1$ and it follows that $a_1\neq a_2$ and $b_1\neq b_2$.

    So $m_k=x^{a_1}y^{b_1}z^{c_1}$ and $m_\ell=x^{a_2}y^{b_2}z^{c_1+1}$ with either $a_1>a_2$ or $a_1<a_2$. Consider two cases as follows.

    \noindent\textbf{Case 1.} $a_1>a_2$

    We have $a_1+b_1+c_1=a_2+b_2+c_1+1$, so $b_2-b_1=a_1-a_2-1 \geq 0$. This implies that $b_2>b_1$, so $\lcm (m_k,m_{\ell})=x^{a_1}y^{b_2}z^{c_1+1}$.  
        Since $G_I(m_k,m_{\ell})$ is connected, $f=m_\ell\cdot\left({x}/{y}\right)$ or $g=m_\ell\cdot\left({x}/{z}\right)$ is in $I$ (see Figure \ref{f:case1}). 
        \begin{figure}[hbt]
            \centering
            \begin{tikzpicture}[scale=0.5]
            \draw [thick] (-6,0) -- (6,0); 
            \draw [thick] (-6,-1) -- (6,-1); 
            \draw [thick] (-5,2) -- (1,-10);
            \draw [thick] (-1,-10) -- (5,2);
            \fill[blue] (-3.5,-1) circle (3pt); 
            \fill[blue] (-3.4,-1.3) circle (0pt) node[left] {$m_k$}; 
            \fill[blue] (4,0) circle (3pt); 
            \fill[blue] (3.9,-0.3) circle (0pt) node[right] {$m_\ell$}; 
            
            \fill[] (3,0) circle (3pt) node[above] {$f$};  
            \fill[red] (3.5,-1) circle (3pt) node[below] {$g$};  
            \fill[red] (-1.5,-1) circle (3pt) node[below] {$g'$};
            \fill[] (0.5,-3) circle (3pt) node[below] {$m_t$};
            \fill[] (0.5,-1) circle (3pt) node[below] {$m$};
            \fill[] (4.5,-1) circle (3pt) node[below] {$h$};
            \end{tikzpicture}
            \caption{Case 1.}
            \label{f:case1}
        \end{figure}
        
        If $g\in I$, then $g\in I_{\ell-1}$. But then the connected graph $G_{I_\ell}(m_k,g)$ is contained in $G_{I_\ell}(m_k,m_\ell)$, and this implies  $G_{I_\ell}(m_k,m_\ell)$ is connected, a contradiction. Thus, $g=x^{a_2+1}y^{b_2}z^{c_1} \not \in I$ and $f= x^{a_2+1}y^{b_2-1}z^{c_2}=m_i \in I$ for some $i$. Next, we prove that $f\not\in I_\ell$. If $f\in I_\ell$, then $f=m_s \in G_{I_\ell}(m_k,m_\ell)$ for some $k<s<\ell$, a contradiction to Claim 1. Hence, $f\in I\setminus I_\ell$.
        

        We now show that for all vertices of the form $m_s=x^py^qz^{c_2} \in I_\ell$ with $k<s<\ell$, we must have $p\leq a_2$. Suppose not, i.e., there is a vertex, say  $m_s=x^py^qz^{c_2} \in I_\ell$ for some $k<s<\ell$, such that $p > a_2$. We have $q < b_2$, so $\lcm (m_s,m_\ell)=x^{p}y^{b_2}z^{c_2}$. Since $f,g\not\in I_\ell$, we obtain that $m_\ell$ is an isolated vertex of $G_{I_\ell}(m_s,m_\ell)$, a contradiction since our choice of $k$ implies that $G_{I_\ell}(m_s,m_\ell)$ is connected.   

        Next, we claim that $h=m_\ell({y}/{z}) \in I_\ell$. If $m_\ell$ is the first generator of $I$ on the $c_2^{\rm th}$ level, then $m_\ell$ must be adjacent to a vertex from the lower level, and since $g\notin I$, $h$ must be in $I$. Otherwise, $m_{\ell-1}=x^py^qz^c_2$ for some $p \leq a_2$. If $h\notin I_\ell$, then as $G_{I_\ell}(m_{\ell-1}, m_{\ell})$ is connected, we have $m_{\ell-1}=m_\ell(y/x)$. But then in $G_{I_{\ell-1}}(m_k,m_{\ell-1})$, $m_{\ell-1}$ is an isolated vertex, which contradicts the fact that $I_{\ell-1}$ is linearly presented. Thus, $h\in I_\ell$. 

        Let $m_q\in G_{I_\ell}(m_k,m_\ell)$ be the generator on the $c_1^{\rm th}$ level with the smallest $x$-degree such that $g'=m_q(y/x)\notin I_\ell$. Then none of the monomials in the segment $[g,g']$ are in $I$, that is, $g', g'({y}/{x}),g'({y}/{x})^2, \ldots, g$ are not in $I$. Consider
        $u=\lcm(g,g')$ whose $z$-degree is $c_1$. We complete the proof of Case 1 by showing that $u$ induces a bad configuration in $G_I$. Let $S$ be the $d$-shadow of $u$. We claim that $S$ consists of monomials not in $I$. On the contrary,
        suppose that there exists some $m_t=x^{\alpha}y^{\beta} z^{\gamma}\in \mathcal{G}(I)$ dividing $u$ for some $t<\ell$. Because of the choice of $u$, $\gamma<c_1$. Since $I$ is linearly presented, $G_I(m_t,m_\ell)$ is connected. Then there is a vertex $m$ of $G_I(m_t,m_\ell)$ with $z$-degree $c_1$.  It is not difficult to see that $\lcm(m_t,m_\ell)=x^{\alpha}y^{b_2}z^{c_2}$, so $m$ must be in the segment $[g,g']$, a contradiction. This proves the claim. However, the $d$-shadow of $ux$ contains $m_q\in I$, that of $uy$ contains $h\in I$, and that of $uz$ contains $m_\ell \in I$. Thus $u$ induces a bad configuration in $G_I$, a contradiction since $I$ has a linear resolution. 

        \noindent\textbf{Case 2.} $a_1<a_2$

        \begin{figure}[h!]
            \centering
            \begin{tikzpicture}[scale=0.5]
            \draw [thick] (-6,0) -- (6,0); 
            \draw [thick] (-6,-1) -- (6,-1); 
            \draw [thick] (-5,2) -- (1,-10);
            \draw [thick] (-1,-10) -- (5,2);
            \fill[red] (-3.5,-1) circle (3pt); 
            \fill[red] (-3.4,-1.3) circle (0pt) node[left] {$g$}; 
            \fill[] (-4.5,-1) circle (3pt) node[below] {$h$}; 
            \fill[blue] (-4,0) circle (3pt); 
            \fill[blue] (-5,-0.3) circle (0pt) node[right] {$m_\ell$}; 
            
            \fill[] (-3,0) circle (3pt) node[above] {$f$}; 
            \fill[red] (1,-1) circle (3pt) node[below] {$g'$};
            
            \fill[blue] (3.5,-1) circle (3pt) node[below] {};  
            \fill[blue] (3.9,-1) circle (0pt) node[below] {$m_k$}; 
            \end{tikzpicture}
            \caption{Case 2.}
            \label{f:case2}
        \end{figure}
    This case is similar to Case 1 and only requires minor modifications. We start by noticing that $\lcm(m_k,m_\ell)=x^{a_2}y^{b_1}z^{c_1+1}$ and $b_1> b_2+1$. As above,  $g=m_\ell(y/z)=x^{a_2}y^{b_2+1}z^{c_1}\notin I$ and $f=m_\ell(y/x)\in I\setminus I_\ell$ (see \Cref{f:case2}).

    Now suppose there exists some $m_s=x^ay^bz^{c_2}\in \mathcal G(I_\ell)$ with $a<a_2$. Then $\lcm(m_s,m_\ell)=x^{a_2}y^bz^{c_2}$. Since $f,g\notin I_\ell$, $m_\ell$ is an isolated vertex in $G_{I_\ell}(m_s,m_\ell)$, a contradiction to the connectedness of $G_{I_\ell}(m_s,m_\ell)$. Thus, for all $m_s=x^ay^bz^{c_2}\in I_\ell$, we must have $a\geq a_2$. 

    Next, we claim that $h=m_\ell(x/z)\in I_\ell$. If $m_\ell$ is the first generator of $I$ on the $c_2^{\rm th}$ level, then $m_\ell$ must be adjacent to a vertex from the lower level, and since $g\notin I$, $h$ must be in $I$. Otherwise, $m_{\ell-1}$ is on the same level as $m_\ell$, so $\deg_x(m_{\ell-1})\geq a_2$. If $h\notin I_\ell$, then as $G_{I_\ell}(m_{\ell-1}, m_{\ell})$ is connected, we have $m_{\ell-1}=m_\ell(x/y)$. But then in $G_{I_{\ell-1}}(m_k,m_{\ell-1})$, $m_{\ell-1}$ is an isolated vertex, which contradicts the fact that $I_{\ell-1}$ is linearly presented. Thus, $h\in I_\ell$. 

    Let $m_q\in G_{I_\ell}(m_k,m_\ell)$ be the generator on the $c_1^{\rm th}$ level having the largest $x$-degree such that $g'=m_q(x/y)\notin I_\ell$. Then $g, g(y/x), g(y/x)^2, \ldots, g'$ do not belong to $I$. Let $S$ be the $d$-shadow of $u=\lcm(g,g')$. Suppose there exists some $m_t\in \mathcal G(I_\ell)\cap S$. Consider the connected graph $G_I(m_t, m_\ell)$. Since $\deg_z(m_t)<c_1$, there exists some $m\in \mathcal G(I_\ell)$ such that $\deg_z(m)=c_1$. Since the $x$-degree of $\lcm(m_t,m_\ell)$ is $a_2$, $\deg_x(m)\leq a_2=\deg_x(g)$. Similarly, $\deg_y(m)\leq \deg_y(\lcm(m_t,m_\ell))=\deg_y(m_t)\leq \deg_y(g')$. Hence, $m\in S$. But all elements in $S$ with $z$-degree equals $c_1$ are outside of $I$. Thus, $S$ consists of degree $d$ monomials not belonging to $I$. However, the $d$-shadow of $ux$ contains $h\in I$, that of $uy$ contains $m_t\in I$ and that of $uz$ contains $f\in I$. This implies $I$ has a bad configuration in $\Delta(d)$, a contradiction.  
\end{proof}
We finally have the theorem that was stated in the introduction.
\begin{theorem}
    Let $I\subseteq \K[x,y,z]$ be a monomial ideal generated in a single degree. Then the following are equivalent:
    \begin{enumerate}
        \item $I$ has a linear resolution;
        \item $I$ has linear quotients;
        \item Every power of $I$ has a linear resolution.
    \end{enumerate}
\end{theorem}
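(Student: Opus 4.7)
The plan is to break the three-way equivalence into the two independent pieces $(1) \Leftrightarrow (2)$ and $(1) \Leftrightarrow (3)$. For the first piece, the nontrivial direction $(1) \Rightarrow (2)$ is exactly Theorem~\ref{th:main}, which has just been proved, while the reverse direction $(2) \Rightarrow (1)$ follows by stitching together standard facts: a monomial ideal with linear quotients is componentwise linear by \cite{SharifanVarbaro, JahanZheng}, and for an equigenerated ideal, being componentwise linear is equivalent to having a linear resolution (see \cite{HerzogHibi}).

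For the second piece, the direction $(3) \Rightarrow (1)$ is immediate upon specializing the hypothesis to the first power $I = I^1$. The remaining direction $(1) \Rightarrow (3)$ is the main obstacle: one must ensure that the linear resolution property is preserved under taking powers for equigenerated monomial ideals in three variables. The plan is to invoke the existing result in the literature on powers of monomial ideals with few variables asserting exactly this, rather than reprove it from scratch.

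If instead one wanted a self-contained proof of $(1) \Rightarrow (3)$ using only the tools built in this paper, one would aim to verify that the combinatorial conditions of Theorem~\ref{th:linreschar} transfer from $I$ to each power $I^k$. Concretely, one would need to show that $G_{I^k}(u,v)$ is connected for all pairs of generators in $\mathcal{G}(I^k)$ and that $I^k$ has no bad configuration in $\Delta(kd)$. The second condition is equivalent by \Cref{lem:socle} to showing that the socle of $S/I^k$ is concentrated in degree $kd-1$, which is the delicate part because socle elements of $S/I^k$ need not arise in a transparent way from those of $S/I$, so this route would demand a case analysis along lines similar to the proof of Theorem~\ref{th:main}.
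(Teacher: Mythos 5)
Your decomposition and the sources you invoke match the paper's own short proof: $(1)\Rightarrow(2)$ is \Cref{th:main}, $(2)\Rightarrow(1)$ is a standard consequence of linear quotients (the paper cites \cite[Lemma 4.1]{ConcaHerzog} directly, while you route through componentwise linearity, but this is the same fact), $(3)\Rightarrow(1)$ is trivial, and $(1)\Rightarrow(3)$ is delegated to the literature (the paper cites \cite[Corollary 7.9]{EHU}). The proposal is correct and essentially identical to the paper's argument.
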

\begin{proof}
    The equivalence of (1) and (2) is established by \Cref{th:main} and \cite[Lemma 4.1]{ConcaHerzog}. (3) implies (1) is clear. (1) implies (3) follows from \cite[Corollary 7.9]{EHU}. 
\end{proof}

\bibliographystyle{amsplain}
\bibliography{references}

\end{document}